\definecolor{codegreen}{rgb}{0,0.6,0}
\definecolor{codegray}{rgb}{0.5,0.5,0.5}
\definecolor{codepurple}{rgb}{0.58,0,0.82}
\definecolor{backcolour}{rgb}{0.95,0.95,0.92}
\lstdefinestyle{mystyle}{
    backgroundcolor=\color{backcolour},   
    commentstyle=\color{codegreen},
    keywordstyle=\color{magenta},
    numberstyle=\tiny\color{codegray},
    stringstyle=\color{codepurple},
    basicstyle=\ttfamily\footnotesize,
    breakatwhitespace=false,         
    breaklines=true,                 
    captionpos=b,                    
    keepspaces=true,                 
    numbers=left,                    
    numbersep=5pt,                  
    showspaces=false,                
    showstringspaces=false,
    showtabs=false,                  
    tabsize=2
}
\begin{document}

\newtheorem*{note}{Note}
\newtheorem{problem}{Problem}
\newtheorem{theorem}{Theorem}
\newtheorem{lemma}[theorem]{Lemma}
\newtheorem{claim}[theorem]{Claim}
\newtheorem{cor}[theorem]{Corollary}
\newtheorem{prop}[theorem]{Proposition}
\newtheorem{definition}{Definition}
\newtheorem{question}[theorem]{Question}
\newtheorem{conjecture}{Conjecture}
\def\cA{{\mathcal A}}
\def\cB{{\mathcal B}}
\def\cC{{\mathcal C}}
\def\cD{{\mathcal D}}
\def\cE{{\mathcal E}}
\def\cF{{\mathcal F}}
\def\cG{{\mathcal G}}
\def\cH{{\mathcal H}}
\def\cI{{\mathcal I}}
\def\cJ{{\mathcal J}}
\def\cK{{\mathcal K}}
\def\cL{{\mathcal L}}
\def\cM{{\mathcal M}}
\def\cN{{\mathcal N}}
\def\cO{{\mathcal O}}
\def\cP{{\mathcal P}}
\def\cQ{{\mathcal Q}}
\def\cR{{\mathcal R}}
\def\cS{{\mathcal S}}
\def\cT{{\mathcal T}}
\def\cU{{\mathcal U}}
\def\cV{{\mathcal V}}
\def\cW{{\mathcal W}}
\def\cX{{\mathcal X}}
\def\cY{{\mathcal Y}}
\def\cZ{{\mathcal Z}}

\def\A{{\mathbb A}}
\def\B{{\mathbb B}}
\def\C{{\mathbb C}}
\def\D{{\mathbb D}}
\def\E{{\mathbb E}}
\def\F{{\mathbb F}}
\def\G{{\mathbb G}}
\def\I{{\mathbb I}}
\def\J{{\mathbb J}}
\def\K{{\mathbb K}}
\def\L{{\mathbb L}}
\def\M{{\mathbb M}}
\def\N{{\mathbb N}}
\def\O{{\mathbb O}}
\def\P{{\mathbb P}}
\def\Q{{\mathbb Q}}
\def\R{{\mathbb R}}
\def\S{{\mathbb S}}
\def\T{{\mathbb T}}
\def\U{{\mathbb U}}
\def\V{{\mathbb V}}
\def\W{{\mathbb W}}
\def\X{{\mathbb X}}
\def\Y{{\mathbb Y}}
\def\Z{{\mathbb Z}}

\def\ep{{\mathbf{e}}_p}
\def\em{{\mathbf{e}}_m}
\def\eq{{\mathbf{e}}_q}

\def\scr{\scriptstyle}
\def\\{\cr}
\def\({\left(}
\def\){\right)}
\def\[{\left[}
\def\]{\right]}
\def\<{\langle}
\def\>{\rangle}
\def\fl#1{\left\lfloor#1\right\rfloor}
\def\rf#1{\left\lceil#1\right\rceil}
\def\le{\leqslant}
\def\ge{\geqslant}
\def\eps{\varepsilon}
\def\mand{\qquad\mbox{and}\qquad}

\def\sssum{\mathop{\sum\ \sum\ \sum}}
\def\ssum{\mathop{\sum\, \sum}}
\def\ssumw{\mathop{\sum\qquad \sum}}

\def\vec#1{\mathbf{#1}}
\def\inv#1{\overline{#1}}
\def\num#1{\mathrm{num}(#1)}
\def\dist{\mathrm{dist}}

\def\fA{{\mathfrak A}}
\def\fB{{\mathfrak B}}
\def\fC{{\mathfrak C}}
\def\fU{{\mathfrak U}}
\def\fV{{\mathfrak V}}

\newcommand{\bflambda}{{\boldsymbol{\lambda}}}
\newcommand{\bfxi}{{\boldsymbol{\xi}}}
\newcommand{\bfrho}{{\boldsymbol{\rho}}}
\newcommand{\bfnu}{{\boldsymbol{\nu}}}

\def\GL{\mathrm{GL}}
\def\SL{\mathrm{SL}}

\def\Hba{\overline{\cH}_{a,m}}
\def\Hta{\widetilde{\cH}_{a,m}}
\def\Hb1{\overline{\cH}_{m}}
\def\Ht1{\widetilde{\cH}_{m}}

\def\flp#1{{\left\langle#1\right\rangle}_p}
\def\flm#1{{\left\langle#1\right\rangle}_m}
\def\dmod#1#2{\left\|#1\right\|_{#2}}
\def\dmodq#1{\left\|#1\right\|_q}

\def\Zm{\Z/m\Z}

\def\Err{{\mathbf{E}}}

\newcommand{\comm}[1]{\marginpar{%
\vskip-\baselineskip 
\raggedright\footnotesize
\itshape\hrule\smallskip#1\par\smallskip\hrule}}

\def\xxx{\vskip5pt\hrule\vskip5pt}

\newenvironment{nouppercase}{%
  \let\uppercase\relax%
  \renewcommand{\uppercasenonmath}[1]{}}{}
  

\title{Approximated solution of a
differential-difference equation arising in number theory and applications to the linear sieve}
\author{Matteo Bordignon\\ University of New South Wales Canberra, School of Science \\ m.bordignon@student.unsw.edu.au }

\date{\today
}


\begin{abstract}
We provide elementary and accurate numerical solutions to the differential-difference equation, which improves an explicit version of the linear sieve given by Nathanson.
 \end{abstract}
\begin{nouppercase}
\maketitle
\end{nouppercase}
\section{Introduction}
This article focuses on methods to approximate the solution $f_n(s)$, for $n\ge 1$ and $s \ge 1$, of the following differential-difference equation
\begin{equation}
\label{eq:diffeq}
\begin{cases} 
      f_n(s)=0 \quad \text{for} \quad s \ge n+2 \\
      sf_1(s)=3-s \quad \text{for} \quad 1 \le s \le 3\\
      (sf_n(s))'=-f_{n-1}(s-1) \mkern6mu \text{for} \mkern6mu n\ge 2 \mkern6mu\text{even} \mkern6mu \& \mkern6mu s\ge 2 \mkern6mu \\
      (sf_n(s))'=-f_{n-1}(s-1) \mkern6mu \text{for} \mkern6mu n\ge 2 \mkern6mu \text{odd} \mkern6mu \& \mkern6mu s\ge 3\\
      (sf_n(s))'=0 \mkern6mu \text{for}\mkern6mu n\ge 2 \mkern6mu \text{odd} \mkern6mu \& \mkern6mu 1 \le s\le 3.
      \end{cases} 
\end{equation}
This differential-difference equation is classical in analytic number theory as it is deeply related to the famous linear sieve, which proves Chen's theorem \cite{Chen2, Chen1, Chen}. 
\begin{theorem}[Chen]
\label{theo:Chen}
All sufficiently large even integers can be written as a sum of a prime and a semi-prime.
\end{theorem}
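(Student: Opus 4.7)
The plan is to apply sieve theory to the set $\cA = \{N - p : p \le N, \ p \text{ prime}\}$, where $N$ is a large even integer, and to produce a positive lower bound for the number of $a \in \cA$ which are either prime or a product of two primes. Fix a threshold $z = N^{1/u}$ for a suitable $u$, and consider the sifting function
$$S(\cA, z) = \#\{a \in \cA : (a, P(z)) = 1\},$$
where $P(z) = \prod_{p < z} p$. Any $a \in \cA$ surviving the sieve has all prime factors at least $z$, hence at most $u$ prime factors counted with multiplicity; the linear sieve gives a positive lower bound for $S(\cA, z)$, with main term expressed through the functions $f_n(s)$ solving \eqref{eq:diffeq}.

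First I would apply the linear sieve of Jurkat--Richert, which requires a level of distribution $D$ for $\cA$. The congruence $N - p \equiv 0 \pmod d$ is equivalent to $p \equiv N \pmod d$, so the Bombieri--Vinogradov theorem furnishes the admissible level $D = N^{1/2 - \eps}$. With this level, the sieve produces a main term of the expected order $N/(\log N)^2$ for the number of $a \in \cA$ free of small prime factors, but this only bounds the number of $a$ with a controlled, yet still too large, number of prime factors.

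Next I would implement Chen's switching principle to upper bound the contribution from those $a$ which split as a product of three large primes $a = p_1 p_2 p_3$. The idea is to interchange the roles of the sifted variable and the prime factorization: one applies a second, upper-bound sieve to a set indexed by products $p_1 p_2$ of appropriate size, whose elements are of the form $N - p_1 p_2$. Combining the lower bound for $S(\cA, z)$, this switching upper bound, and elementary estimates for the remaining configurations, one arrives at a weighted counting function of the desired primes-plus-$P_2$ bounded below by an expression of the shape $(c_1 - c_2) N / (\log N)^2$, with constants $c_1$ and $c_2$ given by explicit integrals and values of $f_n$.

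The main obstacle is verifying that the combined constant $c_1 - c_2$ is strictly positive, which reduces to a numerical positivity statement about specific values of $f_n(s)$. This is exactly where the improved elementary bounds for $f_n(s)$ developed in the remainder of this article enter: they replace Nathanson's explicit estimates with tighter numerical values, thereby making the positivity of the final weighted sum verifiable and yielding Theorem~\ref{theo:Chen} for all sufficiently large even $N$.
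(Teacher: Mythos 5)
The paper never proves Chen's theorem: it states Theorem~\ref{theo:Chen} purely as a classical motivating result and points the reader to Chen's original papers \cite{Chen2,Chen1,Chen} and to \cite[Chapter 10]{Nathanson} (following \cite{Iwaniec}) for a proof. So there is no ``paper's own proof'' to compare your sketch against; the body of this article is entirely about sharpening the constants in the explicit linear sieve (Theorem~\ref{theo:JR}), which is a \emph{tool} in Chen's argument, not the argument itself.

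As a high-level roadmap of the standard proof your sketch is sound: sieve $\cA=\{N-p\}$, get level $D=N^{1/2-\eps}$ from Bombieri--Vinogradov, apply the Jurkat--Richert linear sieve with a suitable $z=N^{1/u}$, and use Chen's switching principle to remove the $p_1p_2p_3$ contribution so that the final weighted count of $P_2$'s is positive. But the final paragraph misattributes what makes the argument close. The positivity $c_1-c_2>0$ in Chen's theorem was established by Chen in the 1960s--70s with the classical (non-explicit) linear sieve; it does not depend on, and is not made ``verifiable'' by, the improved numerical bounds on $f_n(s)$ in this paper. What the present article's sharper $c_n$, $\tau_n$, and the reduction of Nathanson's $e^{14}$ to roughly $160\,e^2$ buy is control of the \emph{error term} $(K-1)$-contribution in the explicit linear sieve, which is relevant to making Chen's theorem \emph{effective} (i.e., giving an explicit threshold beyond which every even integer is a $p+P_2$), a goal the paper itself flags as future work. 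So if you intend your sketch as a proof of the qualitative Theorem~\ref{theo:Chen}, the last paragraph should simply cite Chen/Nathanson for the numerical positivity rather than this paper's improvements; if you intend it as a route to an explicit Chen theorem, then the sketch is missing the genuinely hard ingredients (an explicit Bombieri--Vinogradov or substitute, explicit switching estimates, and an explicit zero-density/Siegel-free input), none of which are supplied here.
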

Given that the field of differential-difference equation is a prolific one, there are many results that can be found on similar equations, see the works of Bellman and Kotkin \cite{Bellman}, van de Lune and Wattel \cite{Van}, Lal and Gillard \cite{Lal},  G. Marsaglia, Zaman and J. Marsaglia \cite{Marsaglia},  Wheeler \cite{Wheeler1, Wheeler2}, Moree \cite{Moree}, Bach \cite{Bach}, Sorenson \cite{Sorenson} and Bradley \cite{Bradley}. Despite this and \eqref{eq:diffeq} being well known, there are relatively few works in the literature giving approximate numerical solutions for this specific diffrential-difference equation. The only results of this kind are in Chapter 9 of the classic book by Nathanson \cite{Nathanson}, building upon unpublished lecture notes by Iwaniec \cite{Iwaniec}. It is interesting to note that in this work the approach is mainly analytic. \hfill \break 
We will focus on improving Nathanson's result, combining his analytic approach with a more compuational one. In doing so we draw inspiration from the similarities between the above differential-difference equation and Dickman function \cite{Dickman}, to obtain a good numerical solution. We will adapt the numerical techniques developed by G. Marsaglia, Zaman and J. Marsaglia \cite{Marsaglia}. Specifically, we will turn the differential-difference equation in an integral-difference equation and solve it via Taylor expansions. This method allows for a rapid rate of convergence to the solution.\hfill \break
We also  introduce an elementary method, which is based on approximating the integral of a decreasing function with a weighted sum, to obtain an upper bound for the function $f_n(s)$ for `small' $n$. The chosen upper bound function is the following
\begin{equation*}
h(s)= \begin{cases} 
      e^{-2} & 1\le s \le 2 \\
      e^{-s} & 2\le s \le 3 \\
      3s^{-1}e^{-s} & s\ge 3,
      \end{cases} 
\end{equation*}
which was chosen by Nathanson and indeed it appears to be a numerically good approximation for $f_n(s)$. Here our purpose is to approximate $c_n$, the smallest number such that if $n$ is odd and $s\ge 1$, or if $n$ is even and $s\ge 2$, then
\begin{equation}
\label{eq:fh}
f_n(s)\le 2e^2 (c_n)^{n-1}h(s).
\end{equation}
It is interesting to note that a result of this kind is useful in applications as will appear clear in Section \ref{ELS}. On the other hand, changing sightly our approach, it is surely possible to obtain a non-uniform upper bound that will be better in certain ranges, but we chose not to pursue this as the above result appears good enough for our applications.
The computational method introduced in Subsection \ref{NUB} will lead to the following result.
\newpage
\begin{theorem}
\label{lemma:fsmall}
Let $f_n(s)$ be defined by \eqref{eq:diffeq}. Then, \eqref{eq:fh} holds with $c_n$ as in Table \ref{tab:cn}.
\begin{table}[H]
    \begin{tabular}{ | l | l | l | l | l | l |}
    \hline
    $n$ & $c_n$  & $n$ & $c_n $ & $n$ & $c_n $ \\  
    \hline
    $2$& $0.33$ & $9$ & $0.61$ & $21-24$ & $0.68$ \\  
    \hline
    $3$& $0.39$& $10$ & $0.61$ & $25-32$ & $0.69$\\
    \hline
    $4$& $0.45$ & $11-12$& $0.63$ & $33-46$ & $0.7$\\
    \hline
    $5$& $0.51$ & $13$ & $0.64$ & $47-80$ & $0.71$\\
    \hline
    $6$& $0.54$ & $14$ & $0.65$ & $81-308$ & $0.72$\\
    \hline
    $7$ & $0.57$ & $15-18$ & $0.66$ & $309-450$ & $0.73$\\
    \hline
    $8$& $0.58$ & $19-20$ & $0.67$ & &\\
    \hline
    \end{tabular}
    \quad
\caption{Upper bound for $c_n$}  
\label{tab:cn}   
\end{table}
\end{theorem}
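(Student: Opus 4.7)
The plan is strong induction on $n$, where the tabulated values of $c_{n}$ are obtained via the quadrature method described in Subsection~\ref{NUB}. For the base case $n=2$, integrate the relation $(sf_{2}(s))'=-f_{1}(s-1)$ from $s$ to $4$, using $f_{2}(4)=0$ and the explicit formula $f_{1}(t)=(3-t)/t$ on $[1,3]$, to obtain the closed form
\begin{equation*}
sf_{2}(s)=3\log\frac{3}{s-1}-(4-s),\qquad 2\le s\le 4.
\end{equation*}
Direct comparison against $2e^{2}(0.33)h(s)$ on $[2,4]$ (splitting at $s=2$ and $s=3$ to accommodate the piecewise definition of $h$) verifies the case $c_{2}=0.33$.

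For the inductive step, assume $f_{n-1}(t)\le 2e^{2}c_{n-1}^{n-2}h(t)$ on the appropriate range. Integrating \eqref{eq:diffeq} against $f_{n}(n+2)=0$ gives the integral representation
\begin{equation*}
f_{n}(s)=\frac{1}{s}\int_{s}^{n+2}f_{n-1}(u-1)\,du,
\end{equation*}
valid for $n$ even with $s\ge 2$ and for $n$ odd with $s\ge 3$. When $n$ is odd and $1\le s\le 3$, the relation $(sf_{n}(s))'=0$ forces $sf_{n}(s)=3f_{n}(3)$, so the bound on this interval reduces to checking a (slightly strengthened) inequality at $s=3$, the binding constraint arising at $s=1$.

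A crude substitution of $f_{n-1}\le 2e^{2}c_{n-1}^{n-2}h$ throughout the integral is too lossy to recover the constants in Table~\ref{tab:cn}; the refinement in Subsection~\ref{NUB} exploits that $f_{n-1}(u-1)$ is decreasing in $u$ on its support, so $\int_{s}^{n+2}f_{n-1}(u-1)\,du$ is bounded above by a weighted Riemann sum $\sum_{i}w_{i}f_{n-1}(t_{i}-1)$ on a sufficiently fine mesh. Each grid value is in turn controlled by the inductive bound, producing an upper estimate of the form $2e^{2}c_{n}^{n-1}h(s)$ with $c_{n}$ chosen to be the smallest constant making the inequality hold uniformly in $s$; the numerical values are then read off from the computation and recorded in the table.

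The main obstacle is the uniformity in $s$ together with the piecewise nature of $h(s)$ (breakpoints at $s=2,3$) and the shifted copy $h(u-1)$ (breakpoints at $u=3,4$): the location of the maximiser of the ratio to be bounded depends on $n$, and the transition between the even-$n$ and odd-$n$ branches of \eqref{eq:diffeq} must be handled consistently at each stage. Moreover, the mesh must be fine enough to secure the two-digit values of $c_{n}$ reported in Table~\ref{tab:cn}, a delicacy that becomes more pronounced as $n$ grows and $c_{n}$ stabilises near its limiting value of roughly $0.73$.
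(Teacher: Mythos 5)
Your description of the setup is right as far as the base case, the integral representation $sf_{n}(s)=\int_{s}^{n+2}f_{n-1}(u-1)\,du$, and the idea of exploiting the monotonicity of $f_{n-1}$ to replace the integral by a left-endpoint Riemann sum. The place where the argument goes wrong is the inductive step: you propose to control each grid value $f_{n-1}(t_i-1)$ by the inductive hypothesis $f_{n-1}\le 2e^{2}c_{n-1}^{n-2}h$, and then read off $c_n$. But that is not a ``refinement'' of the crude substitution you dismissed a sentence earlier --- it \emph{is} the crude substitution, merely discretised. In the fine-mesh limit your estimate collapses to
\begin{equation*}
sf_n(s)\le 2e^{2}c_{n-1}^{n-2}\int_{s}^{\infty}h(t-1)\,dt=2e^{2}c_{n-1}^{n-2}H(s)\le 2e^{2}c_{n-1}^{n-2}\gamma\,s\,h(s),
\end{equation*}
which gives only $c_n^{\,n-1}\le\gamma\,c_{n-1}^{\,n-2}$. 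Iterating, $c_n$ climbs towards $\gamma=0.9214$; this is precisely Theorem~\ref{lemma:f_n}, and it cannot produce the much smaller constants (around $0.73$) recorded in Table~\ref{tab:cn}. The reason is that the bound $f_{n-1}(t)\le 2e^{2}c_{n-1}^{n-2}h(t)$ is attained only at the worst~$t$; feeding it uniformly into the integral throws away all the slack at other points.

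What the paper actually does is different in an essential way: it never re-enters the $h$-bound into the recursion. Instead, it computes and stores numerical upper bounds for the grid values of $f_{n}$ (or $sf_{n}$) themselves, building the array for $f_{n}$ from the stored array for $f_{n-1}$ via the left-endpoint sum, with a small multiplicative $(1+10^{-9})$ factor to guard against rounding. Only at the very end, for each fixed $n$, does it form the ratio to $h$: since both $f_n$ and $h$ are decreasing, $f_n(x_j)/h(x_{j+1})$ bounds $f_n(s)/h(s)$ on $[x_j,x_{j+1}]$, and $c_n$ is extracted as the $(n-1)$-th root of the maximum of these ratios (with the small-$s$ ranges treated separately via \eqref{eq:odd} and \eqref{eq:f4}). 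To fix your proof you must replace ``each grid value is controlled by the inductive bound'' with ``each grid value is a stored numerical upper bound, computed recursively from the exact $f_2$'', and postpone the comparison with $h$ to the final step.
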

We note that it is possible to
compute these bounds on $c_n$ to more decimal places and for larger $n$ if required.
We extend Theorem \ref{lemma:expf_n} to all $n$ in the
following result.
\begin{theorem}
\label{lemma:f_n}
Let $f_n(s)$ be defined by \eqref{eq:diffeq}. Then, \eqref{eq:fh} holds, for all $n$, with $c_n\le 0.9214$.
\end{theorem}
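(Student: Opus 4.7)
The plan is to prove Theorem~\ref{lemma:f_n} by induction on $n$, using Theorem~\ref{lemma:fsmall} as the base case. For $n \le 450$, Table~\ref{tab:cn} gives $c_n \le 0.73 < 0.9214$. For $n > 450$, I assume $c_{n-1} \le 0.9214$ and aim to deduce $c_n \le 0.9214$.

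The first step is to rewrite the DDE in integral form. Integrating $(sf_n(s))' = -f_{n-1}(s-1)$ from $s$ up to $n+2$ and using the boundary condition $f_n(n+2) = 0$ gives
\[
s f_n(s) = \int_s^{n+2} f_{n-1}(u-1) \, du.
\]
The degenerate range $1 \le s \le 3$ for odd $n$ is handled separately via $(sf_n(s))' = 0$, which reduces $sf_n(s) = 3 f_n(3)$ and hence the bound on this range to the bound at $s = 3$. Plugging the inductive hypothesis $f_{n-1}(u-1) \le 2e^2 c_{n-1}^{n-2} h(u-1)$ into the integral form and dividing by $2e^2 s h(s)$ yields
\[
\frac{f_n(s)}{2e^2 h(s)} \le c_{n-1}^{n-2} K_n(s), \qquad K_n(s) := \frac{1}{s h(s)} \int_s^{n+2} h(u-1) \, du.
\]
Taking the supremum over $s$ gives the inductive relation $c_n^{n-1} \le c_{n-1}^{n-2} \sup_s K_n(s)$, so that a uniform bound $\sup_s K_n(s) \le 0.9214$ would yield $c_n^{n-1} \le 0.9214^{n-2} \cdot 0.9214 = 0.9214^{n-1}$, closing the induction.

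The main obstacle is establishing the supremum bound. This requires splitting the integral according to the piecewise definition of $h$ over $[1,2]$, $[2,3]$, and $[3,\infty)$, and optimizing over $s$ in the relevant ranges for each parity of $n$. A purely crude substitution of $h$ everywhere is loose near $s = 2$, so one refines by exploiting the explicit formula $f_{n-1}(v) = 3 f_{n-1}(3)/v$ on $[1,3]$ when $n-1$ is odd, combined with the sharper inequality $f_{n-1}(3) \le \tfrac{2}{3} c_{n-1}^{n-2}$ obtained by applying the inductive bound at $s = 1$ (where $h(1) = e^{-2}$). The remaining work is computational: carrying out the piecewise integration, verifying the bound on $\sup_s K_n(s)$ for both parities of $n$, and checking that the worst case (arising in a small window around the break points of $h$) stays below the target constant $0.9214$.
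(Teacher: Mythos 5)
Your approach matches the paper's at the structural level: turn the differential recurrence into the integral recurrence and induct, comparing $\int_s^{\infty} f_{n-1}(t-1)\,dt$ against $sh(s)$ with the inductive bound on $f_{n-1}$. The paper packages the key estimate as a standalone result (its Theorem~\ref{theo:H}), namely $H(s)\le \gamma\, sh(s)$ for $s\ge 3$ and $H(3)\le \gamma\, sh(s)$ for $1\le s\le 3$ with $\gamma=0.9214$, and then closes the induction by pointing to Nathanson's Lemma 9.7. Your $\sup_s K_n(s)$ is essentially $H(s)/(sh(s))$ (with the irrelevant truncation at $n+2$), so you and the paper are computing the same thing.

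Two concrete gaps remain in your write-up. First, you never actually locate the constant $0.9214$: it is not a free parameter to be verified numerically but is forced by the worst configuration, which is $n$ odd and $s=1$ (not $s=3$, as you write --- $sh(s)$ is minimized on $[1,3]$ at $s=1$, not at $s=3$). There one needs $H(3)=(2\alpha-1)e^{-2}\le c\, e^{-2}$ where $\alpha=e^{2}H(2)/2\approx 0.96068$, giving exactly $c\ge 2\alpha-1\approx 0.92136$. The other two regimes ($s\ge 3$, where the constant is $e/3\approx 0.906$, and $2\le s\le 3$ with the $H(3)$ replacement, where it is $e(2\alpha-1)/3\approx 0.835$) are strictly smaller, so $2\alpha-1$ is the bottleneck. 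Without this computation your claim that ``a uniform bound $\sup_s K_n(s)\le 0.9214$'' suffices is unsupported: in fact for $n$ even near $s=2$ the raw ratio $H(s)/(sh(s))$ equals $\alpha\approx 0.96>0.9214$, so as stated your inductive inequality is false.

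Second, you do correctly diagnose that the crude substitution fails near $s=2$ for even $n$ and that the cure is the exact form $f_{n-1}(v)=3f_{n-1}(3)/v$ on $[1,3]$ (this is precisely the paper's identity \eqref{eq:f4}), together with the bound $f_{n-1}(3)\le\tfrac{2}{3}c_{n-1}^{\,n-2}$. That is the right refinement. But you stop at ``the remaining work is computational.'' For a complete proof one must actually carry that refinement through and check it beats $0.9214$ on $[2,3]$ (it does, with room to spare: the worst case at $s=2$ requires $c\gtrsim 0.73$). So the plan is sound and essentially mirrors the paper, but two essential verifications --- the derivation of $0.9214$ from $\alpha$, and the piecewise check near $s=2$ --- are asserted rather than done.

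As a minor point, invoking Theorem~\ref{lemma:fsmall} for $n\le 450$ as a base case is logically fine but unnecessary and makes the dependency graph circular in spirit: the paper just verifies $n=1$ directly, since $(c_1)^0=1$ and $f_1(s)\le 2e^2 h(s)$ is immediate.
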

By Table \ref{tab:cn} it appears clear that the above upper bound is relatively tight. The above results will then be used in Section \ref{ELS} to prove Theorem \ref{theo:JR}, which improves on the Jurkat–Richert  version of the explicit linear
sieve \cite{JR}, proved by Nathanson in Theorem 9.7 \cite{Nathanson}. This result is of high interest as it is one of the fundamental elements that could lead to an explicit version of Chen's Theorem \ref{theo:Chen}. For a recent proof of Chen's theorem see \cite[Chapter 10]{Nathanson}, based on \cite{Iwaniec}. \hfill \break
In Section \ref{EXPS} we aim to approximate $f_n(s)$. Specifically in Subsection \ref{NS} we introduce the method, inspired by \cite{Marsaglia}, to compute $f_n(s)$ to a hight precision, in Subsection \ref{NUB} we prove Theorem \ref{lemma:fsmall} and in Subsection \ref{AA} we prove Theorem \ref{lemma:f_n}. In Section 
\ref{ELS} we prove the explicit version of the linear sieve.
Note that all the computations are performed on Sage \cite{Sage}.
\section{Numerical and analytic approximations of the solution}
\label{EXPS}
To compute $f_n(s)$ we convert the differential-difference equation \eqref{eq:diffeq} into the following integral-difference equation, with the same boundary conditions and initial value. 
\begin{lemma}
\label{intf}
Let $n\ge 2$. If $n$ is even and $s\ge 2$, or if $n$ is odd and $s\ge 3$, then
\begin{equation}
\label{eq:all}
sf_n(s)=\int_s^{\infty}f_{n-1}(t-1)dt.
\end{equation}
If $n$ is odd and $1\le s \le 3$, then
\begin{equation}
\label{eq:odd}
sf_n(s)=3f_n(3)=\int_3^{\infty}f_{n-1}(t-1)dt.
\end{equation}
\end{lemma}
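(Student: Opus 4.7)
The plan is to obtain \eqref{eq:all} by integrating the ODE $(sf_n(s))' = -f_{n-1}(s-1)$ from $s$ up to $\infty$, and then to derive \eqref{eq:odd} using the vanishing of the derivative on $[1,3]$ together with continuity at $s=3$.

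First I would fix $n\ge 2$, assume we are in the regime where the differential equation $(tf_n(t))' = -f_{n-1}(t-1)$ holds (i.e.\ $t\ge 2$ for $n$ even, $t\ge 3$ for $n$ odd), and integrate both sides from $s$ to a large parameter $T$, getting
\begin{equation*}
Tf_n(T) - sf_n(s) = -\int_s^T f_{n-1}(t-1)\,dt.
\end{equation*}
The boundary condition $f_n(u)=0$ for $u\ge n+2$ makes the left boundary term vanish once $T\ge n+2$, and for the same reason the integrand is supported on the bounded interval $[s,n+2]$, so letting $T\to\infty$ is harmless and yields \eqref{eq:all}.

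For \eqref{eq:odd}, I would use the fifth line of \eqref{eq:diffeq}: for $n$ odd and $1\le s\le 3$ the derivative $(sf_n(s))'$ vanishes, so $sf_n(s)$ is constant on $[1,3]$, hence equals its value at $s=3$, namely $3f_n(3)$. To obtain the second equality $3f_n(3)=\int_3^{\infty}f_{n-1}(t-1)\,dt$, I would apply the already-proved identity \eqref{eq:all} at $s=3$, which is in the admissible range for odd $n$. A minor point to check is that $sf_n(s)$ is continuous at $s=3$ (so that the constant value on $[1,3]$ really equals the limit from above); this is immediate from the fact that $f_n$ is defined by integrating a bounded continuous function of one lower index, which one can verify by an outer induction on $n$.

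The only potential obstacle is regularity: the derivation assumes that $sf_n(s)$ is absolutely continuous so that the fundamental theorem of calculus applies, and that $f_{n-1}(t-1)$ is integrable on $[s,\infty)$. Both facts follow by induction on $n$, with the base case $n=1$ handled directly from the explicit formula $sf_1(s)=3-s$ on $[1,3]$ (and zero beyond), and the inductive step following because $f_{n-1}$ is continuous, bounded, and compactly supported in the relevant range by the boundary condition $f_{n-1}(u)=0$ for $u\ge n+1$. Thus the integration is unproblematic and the lemma follows.
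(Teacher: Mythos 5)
Your proof is correct and uses the natural approach: integrate $(tf_n(t))'=-f_{n-1}(t-1)$ from $s$ to $T$, kill the boundary term at $T$ via $f_n=0$ for $T\ge n+2$, and for odd $n$ on $[1,3]$ use that $(sf_n(s))'=0$ forces $sf_n(s)$ to be constant, then evaluate at $s=3$ using the already-established identity. The paper states this lemma without proof, treating it as a routine consequence of the defining system \eqref{eq:diffeq}, so your argument simply spells out the details (including the regularity/integrability points) that the paper leaves implicit; there is no divergence in method.
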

This lemma makes it possible to compute $f_n(s)$ recursively. We easily obtain
\begin{equation}
\label{eq:f2}
sf_2(s)= \begin{cases} 
      s-3\log(s-1)+3 \log 3-4 & 2\le s \le 4 \\
      0 & s\ge 4,
      \end{cases} 
\end{equation}
but for larger $n$ the solution appears complicated. 
For small $n$ we can use the Sage built in function \verb|integral_numerical|, that implements the Gauss--Kronrod integration method, and Lemma \ref{intf}, to obtain a first estimate of $f_n(s)$.
This gives \eqref{eq:fh} with the following approximated values for $c_n$, for small values of $n$
\begin{equation}
\label{eq:f2}
c_2\approx 0.33 \quad c_3 \approx 0.39\quad c_4\approx 0.45\quad c_5\approx 0.51 \quad c_6\approx 0.52.
\end{equation}
There is no guarantee that these results are accurate, but they will later be useful for comparison.
Indeed, it appears that with this approach and limited computational time, we can estimate only up to $c_6$. We thus clearly need a better method to approximate $f_n(s)$.
\subsection{Numerical solution }
\label{NS}
 We can note that many ideas that appear in the papers cited in the introduction could be applied to our differential-difference equation. In this section we will focus on the idea presented in one of these papers. We draw inspiration from the similarities between $f_n(s)$ and the Dickman function, using ideas from \cite{Marsaglia}, to compute approximate solutions for $f_n(s)$.  but we will not pursue this 
 By \eqref{eq:odd} it is clear that we can focus on $s \ge 3$ when $n$ is odd. Thus for $t\ge 0$ , by \eqref{eq:all}, we obtain that
 \begin{equation}
 \label{eq:recapp}
 f_n(k+1+t)=\frac{k+1}{k+1+t}f_n(k+1)-\frac{k+1}{k+1+t}\int_{0}^{t}f_{n-1}(k+x)dx.
 \end{equation}
 This allows us to compute an approximated solution for $f_n(s)$, by induction on $n$ and $k$, using its Taylor expansion. 
 From \eqref{eq:diffeq} it is easy to see that, for $k=1,2$ and $-1 \le z \le 1$, we have 
 \begin{equation*}
 f_1(k+\frac{1}{2}+\frac{1}{2}z)=\frac{3}{k+\frac{1}{2}}-1+\sum_{n=1}^{\infty}\frac{3}{k+\frac{1}{2}}\frac{1}{(2k+1)^n}z^n,
\end{equation*} 
which will be the first step of our inductive proof. 
We now assume that we have
\begin{equation}
\label{eq:ind}
 f_{n}(k+\frac{1}{2}+\frac{1}{2}z)=b^k_{0, n}+b^k_{1, n}z+b^k_{2, n}z^2+\cdots,
 \end{equation}
for all $n \le \overline{n}$ and $k\le \overline{k}$, with $-1\le z \le 1$. The proof is now divided in two cases. First, if $\overline{n}$ is even and $\overline{k}\le \overline{n}-1$, or $\overline{n}$ odd and $k \le \overline{n}$, we seek 
 \begin{equation*}
 f_{\overline{n}}(\overline{k}+1+\frac{1}{2}+\frac{1}{2}z)=b^{\overline{k}+1}_{0, \overline{n}}+b^{\overline{k}+1}_{1, \overline{n}}z+b^{\overline{k}+1}_{2, \overline{n}}z^2+\cdots,
 \end{equation*}
 that will be the inductive step.
 By \eqref{eq:recapp} and \eqref{eq:ind}, we obtain that
 \begin{align*}
 &(\overline{k}+1+\frac{1}{2}+\frac{1}{2}z)(b^{\overline{k}+1}_{0, \overline{n}}+b^{\overline{k}+1}_{1, \overline{n}}z+b^{\overline{k}+1}_{2, \overline{n}}z^2+\cdots)=(\overline{k}+1)\cdot \\ &\cdot (b^{\overline{k}}_{0, \overline{n}}+b^{\overline{k}}_{1, \overline{n}}+b^{\overline{k}}_{2, \overline{n}}+\cdots)-\frac{1}{2}[b^{\overline{k}}_{0, \overline{n}-1}t+b^{\overline{k}}_{1, \overline{n}-1}\frac{t^2}{2}+b^{\overline{k}}_{2, \overline{n}-1}\frac{t^3}{3}+\cdots]_{-1}^z.
 \end{align*}
 That is
 \begin{align*}
 &b^{\overline{k}+1}_{0, \overline{n}}=\frac{\overline{k}+1}{\overline{k}+1+\frac{1}{2}}(b^{\overline{k}}_{0, \overline{n}}+b^{\overline{k}}_{1, \overline{n}}+\cdots)-\frac{(b^{\overline{k}}_{0, \overline{n}-1}(-1)+b^{\overline{k}}_{1, \overline{n}-1}\frac{(-1)^2}{2}+\cdots)}{2({\overline{k}}+1+\frac{1}{2})},\\
 &b^{\overline{k}+1}_{i, \overline{n}}=-\frac{1}{2}\frac{\frac{b^{\overline{k}}_{i-1, \overline{n}-1}}{i}+b^{\overline{k}+1}_{i-1, \overline{n}}}{\overline{k}+1+\frac{1}{2}} \quad \text{for} \quad i>0,
 \end{align*}
 thus concluding the inductive step in the first case.
In second case, namely if $\overline{n}$ is even and $\overline{k}= \overline{n}$, or $\overline{n}$ odd and $k=\overline{n}+1$, we seek $b^{1}_{i, \overline{n}+1}$, that will be the inductive step. We can compute these terms, using \eqref{eq:recapp} as in the first case, observing that if $\overline{n}$ is even
\begin{align*}
f_{\overline{n}+1}(1)=& \frac{1}{4}\Big( b^{\overline{n}}_{0, \overline{n}}+b^{\overline{n}}_{1, \overline{n}}\frac{1}{2}+b^{\overline{n}}_{2, \overline{n}}\frac{1}{3}+\cdots \\ & \cdots -(-b^1_{0, \overline{n}}+b^1_{1, \overline{n}}\frac{1}{2}-b^1_{2, \overline{n}}\frac{1}{3}+\cdots)\Big),
\end{align*}
and if $n$ is odd
\begin{align*}
f_{\overline{n}+1}(2)= &\frac{1}{2}\Big( b^{\overline{n}+1}_{0, n-1}+b^n_{1, \overline{n}}\frac{1}{2}+b^{\overline{n}+1}_{2, \overline{n}}\frac{1}{3}+\cdots \\& \cdots-(-b^1_{0, \overline{n}}+b^2_{1, \overline{n}}\frac{1}{2}-b^2_{2, \overline{n}}\frac{1}{3}+\cdots)\Big).
\end{align*}
This concludes the inductive step. \hfill \break 
We note that the accuracy of $f_n(s)$ depends on the accuracy of $b^{k+1}_{i, n}$. As all these constants are rational multiples of $b^{k+1}_{0, n}$, it is thus enough to store in an array, for each $n$ and $k$, the value of this last constant to a high enough precision. \hfill \break  Note that we will not use this approach in the applications as a more elementary upper bound suffice. We still think that the above framework could be of interest in different applications and thus decided to include it in this paper.
\subsection{Numerical upper bound} 
\label{NUB}
Here, we introduce a different, and more elementary, framework used to bound $f_n(n)$ by above with $h(s)$. We start observing that by \eqref{eq:odd}, for $n>2$ even and $2\le s \le 4$, we have
\begin{equation}
\label{eq:f4}
sf_n(s)=3f_{n-1}(3)\log(\frac{3}{s-1})+\int_4^{\infty}f_{n-1}(t-1)dt.
\end{equation}
Thus by \eqref{eq:f4} and \eqref{eq:odd}, we can focus on $s\ge 3$, when $n$ is odd and $s\ge 4$, when $n$ is even.
We can now note that $f_n(s)$ is decreasing in $s$ and thus we can easily bound it using Lemma \ref{intf} in the following way. Taken $0\le i \le k$ , $x_0 \le s$, $x_k\ge n+2$ and $x_{n+1}\ge x_n$, we have
\begin{equation}
sf_n(s) \le \sum_{ x_i } f(x_i)(x_i-x_{i+1}).
\end{equation}
We are thus left with optimizing on the choice of $x_i$. We opted for a uniform splitting, which appears preferable compared to, for example, an exponentially increasing one. We then used \eqref{eq:f2} as a comparison to choose $x_i-x_{i-1}=1/500$, for all $i$. Here, we report the implemented code.
\begin{lstlisting}[language=Python, caption=Upper bound $f_n$]
def f2(x):
    return 1+(-3*ln(x-1)+3*ln(3)-4)/x
m=1000
n=var('n')
j=var('j')
a=500
s_f={}
for n in range(3,a,1):
    s_f[n]=[]
    if n==3:
        for i in range((n-1)/2*m+2):
            s_f[n].append(0)                                                      
        j=(n-1)/2*m
        while 0<=j<=(n-1)/2*m:
            s_f[3][j]=N(f2(2+2*j/m)*2/m +s_f[3][j+1],                                      
            digits=10)*(1+10^(-9)) 
            j=j-1  
    if is_odd(n) and n > 3:
        for i in range((n-1)/2*m+2):
            s_f[n].append(0)   
        j=(n-1)/2*m
        while 0<=j<=(n-1)/2*m:
            s_f[n][j]=N(s_f[n-1][j]*2/m/(2+2*j/100)+
            +s_f[n][j+1], digits=10)*(1+10^(-9))
    if is_even(n):
        for i in range(n/2*m+2):
            s_f[n].append(0)
        j=(n/2-1)*m   
        while 0<=j<=(n/2-1)*m:
            s_f[n][m+j]=N(s_f[n-1][j]*2/m/(3+2*j/m)+
            +s_f[n][m+j+1], digits=10)*(1+10^(-9))
            j=j-1 
        j=(m-1)
        while 0<=j<=(m-1):
            s_f[n][j]=N(s_f[n-1][0]*(ln(3)-ln(1+2*j/m))+
            +s_f[n][m], digits=10)*(1+10^(-9))
            j=j-1                
\end{lstlisting}
In the above code, the introduction of \verb|N(, digits=10)*(1+10^(-9))| appears to be fundamental, as without it we would be able to compute an upper bound for $f_n(s)$ only up to $n=8$. Also, confronting the upper bounds for $n\le 8$ computed in the two different ways, it appears that the introduced error term does not strongly affect the accuracy of the result.
\hfill \break
We can now compute a good upper bound for $c_n$, for `small' $n$. Here, we use the fact that both $f_n(s)$ and $h(s)$ are decreasing. Indeed for each interval $[x_n, x_{n+1}]$ we have that $f_n(x_{n+1})/h(x_n)$ is an upper bound for $f_n(s)/h(s)$ for all $s \in [x_n, x_{n+1}]$. Then, to obtain the upper bound for $c_n$, we confront these values and use \eqref{eq:odd} and \eqref{eq:f4} to compute the upper bound for small $s$. Here, we report the implemented code.
\begin{lstlisting}[language=Python, caption= Upper bound for $n$ odd]
n='odd'
j=0
a=0
while 0<=j<=(n-1)/2*m:
    a=max(a,s_f[n][j]/(6*e^2*(3+2*j/m))*(3+2*(j+1)/m)*
    e^(3+2*(j+1)/m))
    j=j+1
print(max(N((s_f[n][0]/2)^(1/(n-1)), digits=10), N(a^(1/(n-1)), digits=10)))              
\end{lstlisting}
\begin{lstlisting}[language=Python, caption= Upper bound for $n$ even]
def g(f,a,b):
    (y, e) = find_local_maximum(f,a,b)
    return y
n='even'
j=m
s=var('s')
a=0
while m<=j<=(n/2)*m:
    a=max(a,s_f[n][j]/(6*e^2*(2+2*j/m))*(2+2*(j+1)/m)
    *e^(2+2*(j+1)/m) )
    j=j+1       
print(max(N(a^(1/(n-1)), digits=10), g(((s_f[n-1][0]*ln(3/(s-1))  +s_f[n][m])/(2*e^2*s)*e^s)^(1/3), 2,3), g(((s_f[n-1][0]*ln(3/(s-1))+s_f[n][m])/(6*e^2)*e^s)^(1/(n-1)), 3,4)))               
\end{lstlisting}
Note that we can expect \verb|find_local_maximum| to give an accurate result as we apply it to simple functions.
We can thus make \eqref{eq:f2} precise proving Theorem \ref{lemma:fsmall}.
We also used \verb|find_local_maximum|, and \eqref{eq:f2}, to compute the upper bound for $c_2$. 
\subsection{Analytic approximation}
\label{AA}
In this subsection we will strictly follow Nathanson's approach from Chapter 9 \cite{Nathanson}, improving his results.
We will start introducing some properties of $h(s)$.
It is easy to see that
\begin{equation*}
h(s-1)\le \frac{4e}{3}h(s)~~~\text{for} ~~~s\ge 2,
\end{equation*}
that is a more precise version of Exercise 8 \cite{Nathanson}.
For $s\ge 2$, let
\begin{equation*}
H(s)=\int_s^{\infty} h(t-1) dt.
\end{equation*}
Let
\begin{equation*}
\alpha=\frac{H(2)}{2h(2)}=\frac{e^2H(2)}{2}\approx 0.96068.
\end{equation*}
We can now give an improved version of Lemma 9.6 \cite{Nathanson}.
\begin{lemma}
We have
\begin{equation*}
H(s)\le \frac{e}{3}sh(s)~~~\text{for}~~~s\ge 3,
\end{equation*}
\begin{equation*}
H(3)\le \frac{e(2\alpha-1)}{3}sh(s)~~~\text{for}~~~2\le s\le 3,
\end{equation*}
\begin{equation*}
H(3)\le (2\alpha-1)sh(s)~~~\text{for}~~~1\le s\le 2.
\end{equation*}
\end{lemma}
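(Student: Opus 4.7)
The strategy is to deduce all three inequalities from an explicit value for $H(3)$. The key preliminary computation is
\[
H(2) - H(3) = \int_2^3 h(t-1)\,dt = \int_2^3 e^{-2}\,dt = e^{-2} = h(2),
\]
since $t-1 \in [1,2]$ on the range of integration. Combined with the defining relation $H(2) = 2\alpha h(2)$, this yields
\[
H(3) = (2\alpha-1)\,h(2) = (2\alpha-1)\,e^{-2}.
\]

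For the first inequality ($s\ge 3$), the target simplifies to $H(s)\le e^{1-s}$, because $(e/3)\,sh(s) = (e/3)\cdot s\cdot 3 s^{-1}e^{-s} = e^{1-s}$ on this range. I would split at $s=4$. For $s\ge 4$, the substitution $u=t-1$ gives
\[
H(s) = \int_{s-1}^{\infty} 3 u^{-1} e^{-u}\,du,
\]
and one integration by parts ($U=u^{-1}$, $dV=e^{-u}du$) bounds this by $3(s-1)^{-1}e^{-(s-1)} \le e^{1-s}$, using $s-1\ge 3$ to absorb the factor of $3$. For $3\le s\le 4$, I would split the integral at $t=4$; on $[s,4]$ we have $h(t-1)=e^{-(t-1)}$, so
\[
H(s) = \int_s^4 e^{-(t-1)}\,dt + H(4) = \bigl(e^{1-s}-e^{-3}\bigr) + H(4),
\]
and the already-proved bound $H(4)\le e^{-3}$ makes the two error terms cancel, giving exactly $H(s)\le e^{1-s}$.

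The remaining two inequalities are pure optimisation, using the closed form for $H(3)$. On $[2,3]$ the function $sh(s)=se^{-s}$ has derivative $(1-s)e^{-s}<0$, hence is decreasing, with minimum $3e^{-3}$ at $s=3$; thus
\[
\tfrac{e(2\alpha-1)}{3}\,sh(s) \;\ge\; \tfrac{e(2\alpha-1)}{3}\cdot 3e^{-3} \;=\; (2\alpha-1)e^{-2} \;=\; H(3).
\]
On $[1,2]$, $sh(s)=se^{-2}$ is increasing, with minimum $e^{-2}$ at $s=1$, so $(2\alpha-1)sh(s) \ge (2\alpha-1)e^{-2} = H(3)$. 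Both bounds are tight, attained at $s=3$ and $s=1$ respectively.

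There is no single hard step. The only genuinely analytic ingredient is the integration-by-parts estimate, which works precisely because the factor $3$ in $h(s)=3s^{-1}e^{-s}$ is neutralised by $1/v$ as soon as $v=s-1\ge 3$; this is exactly why the case $s\ge 3$ splits naturally at $s=4$. Everything else is bookkeeping with the piecewise definition of $h$ and with extremal values of $sh(s)$ on the relevant subintervals.
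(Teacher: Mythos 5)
Your proof is correct and follows essentially the same strategy as the paper: establish the closed form $H(3)=(2\alpha-1)e^{-2}$ and reduce the second and third inequalities to monotonicity of $sh(s)$ on the relevant subintervals. The only divergence is in the case $s\ge 3$: the paper simply notes the uniform pointwise bound $h(t-1)\le e^{1-t}$ for $t\ge 3$ (trivially true since $3/(t-1)\le 1$ once $t-1\ge 3$, with equality on $2\le t-1\le 3$) and integrates, obtaining $H(s)\le\int_s^\infty e^{1-t}\,dt=e^{1-s}$ in one line. Your split at $s=4$ with integration by parts reaches the same bound but is heavier than necessary — the integration by parts buys nothing that the crude bound $u^{-1}\le(s-1)^{-1}$ wouldn't also give, and that in turn is superseded by the pointwise estimate on the original integrand. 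You may wish to streamline that portion to match the paper's one-line argument; everything else is identical in substance.
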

\begin{proof}
If $s\ge 3$, $h(s-1)\le e^{1-s}$ and
\begin{equation*}
H(s)\le \int_s^{\infty} e^{1-t}dt=e^{1-s}=\frac{e}{3}sh(s).
\end{equation*}
If $2\le s\le 3$ we have
\begin{equation*}
H(3)=H(2)-e^{-2}=(2\alpha-1)e^{-2}\le \frac{e(2\alpha-1)}{3}sh(s).
\end{equation*}
If $1\le s\le 2$ we have
\begin{equation*}
H(3)=H(2)-e^{-2}=H(2)-h(2)=h(2)(2\alpha-1)\le (2\alpha-1)sh(s).
\end{equation*}
\end{proof}
We thus improve Lemma 9.6 \cite{Nathanson}.
\begin{theorem}
\label{theo:H}
For $\gamma =0.9214$, we have
\begin{equation*}
H(s)\le \gamma sh(s) \quad \text{for} \quad s\ge 3
\end{equation*}
and 
\begin{equation*}
H(3)\le \gamma sh(s) \quad \text{for} \quad 1\le s\le 3.
\end{equation*}
\end{theorem}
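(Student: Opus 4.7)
The plan is to observe that Theorem \ref{theo:H} is a direct consolidation of the three bounds established in the preceding lemma, with $\gamma$ chosen as the largest of the three constants appearing there. The three constants are $e/3$ (for $s\ge 3$), $e(2\alpha-1)/3$ (for $2\le s\le 3$), and $2\alpha-1$ (for $1\le s\le 2$). Since $e/3\approx 0.9061$ and $e(2\alpha-1)/3 < e/3$, the binding constant is $2\alpha-1$, and so I would set $\gamma := 2\alpha-1$ and verify numerically that $\gamma \le 0.9214$.

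Concretely, I would first compute $\alpha$ to sufficient accuracy by splitting the integral $H(2)=\int_2^{\infty} h(t-1)\,dt$ according to the three pieces of $h$:
\begin{equation*}
H(2) = \int_2^3 e^{-2}\,dt + \int_3^4 e^{-(t-1)}\,dt + \int_4^{\infty} \frac{3e^{-(t-1)}}{t-1}\,dt = 2e^{-2} - e^{-3} + 3E_1(3),
\end{equation*}
where $E_1(3)=\int_3^{\infty} u^{-1}e^{-u}\,du$. Then $\alpha = e^2 H(2)/2 \approx 0.96068$, giving $2\alpha-1\approx 0.92136 \le 0.9214 = \gamma$.

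With this choice of $\gamma$, the three cases are immediate consequences of the previous lemma: for $s\ge 3$ we have $H(s)\le (e/3)\,sh(s) < \gamma\, sh(s)$; for $2\le s\le 3$ we have $H(3) \le (e(2\alpha-1)/3)\,sh(s) = (e\gamma/3)\,sh(s) < \gamma\, sh(s)$; and for $1\le s\le 2$ we have $H(3)\le (2\alpha-1)\,sh(s) = \gamma\, sh(s)$.

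There is no real obstacle here — the only care needed is to make the evaluation of $\alpha$ rigorous (so that the stated numerical inequality $2\alpha-1\le 0.9214$ is genuinely certified). For this I would use a rigorous enclosure of $E_1(3)$, for instance via the standard alternating tail bound $\bigl|E_1(3) - e^{-3}\sum_{k=0}^{N}(-1)^k k!/3^{k+1}\bigr| \le e^{-3}(N+1)!/3^{N+2}$, which gives $E_1(3)$ to arbitrary precision and hence a certified upper bound on $\alpha$, completing the proof.
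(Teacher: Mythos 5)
Your proposal is correct and matches the paper's (implicit) derivation: Theorem \ref{theo:H} is simply a consolidation of the three bounds in the preceding lemma, with $\gamma$ taken to be the largest of the constants $e/3\approx 0.9061$, $e(2\alpha-1)/3\approx 0.835$, and $2\alpha-1\approx 0.92136$, so $\gamma=0.9214$ works. Your extra step of certifying $\alpha$ via the decomposition $H(2)=2e^{-2}-e^{-3}+3E_1(3)$ and a tail bound for $E_1(3)$ is a welcome rigorization of the numerical claim but is the same argument in substance.
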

We can now use the above theorem to improve Lemma 9.7 \cite{Nathanson}, proving Theorem \ref{lemma:f_n}.
\begin{proof}
The proof is by induction on $n$. The case $n=1$ is the same as Lemma 9.7 \cite{Nathanson}. Now let $n\ge 2$, and assume the result holds for $n-1$, we can easily prove that it holds for $n$ using \eqref{lemma:f_n} and \eqref{theo:H}.
\end{proof}
Note that $\gamma$ appears optimal with the adopted framework.

\section{Explicit version of the linear sieve}
\label{ELS}
We now report some definitions and results that are fundamental to introduce the linear sieve.
Let $\mathbb{P}$ be a set of primes and $g(d):\mathbb{N}\rightarrow \mathbb{C}$ a multiplicative function. For $2\le z \le D$, with $D \in \mathbb{R}^+$, we define
\begin{equation*}
V(z):= \prod_{\substack{p\in \mathbb{P}\\ p<z}}(1-g(p)),
\end{equation*}
and
\begin{equation*}
y_m=y_m( D, p_1, \cdots,p_m)=\left( \frac{D}{p_1 \cdots p_n}\right)^{\frac{1}{2}}.
\end{equation*}
We further define
\begin{equation*}
T_n(D,z)= \sum_{\substack{p_1 \cdots p_n \in \mathbb{P}\\y_n\ge p_n < \cdots <p_1<z\\p_m<y_m\forall M<n,~m\equiv n~(\text{mod}~ 2)}}g(p_1\cdots p_n)V(p_n).
\end{equation*} 
The upper bound on $f_n(s)$ will now be fundamental as we will use it to approximate $T_n(D,z)$. 
We now need the following result that is Lemma 9.8 \cite{Nathanson}.
\begin{lemma}
Let $z\ge 2$ and $1 < w < z $. Let $\mathbb{P}$ be a set of primes, and let $g(d)$ be a multiplicative function such that
\begin{equation*}
0\le g(p) < 1 \quad \text{for all} \quad p\in \mathbb{P}
\end{equation*}
and
\begin{equation}
\label{eq:prod}
\prod_{\substack{p\in \mathbb{P}\\ u \le p < z}}(1-g(p))^{-1} \le K \frac{\log z}{\log u},
\end{equation}
for some $K> 1$ and all $u$ such that $1 < u < z$. Let 
\begin{equation*}
V(z)=\prod_{\substack{p\in \mathbb{P}\\ p < z}}(1-g(p)),
\end{equation*}
and let $\Phi $ be a continuous, increasing function on the interval $[w, z]$. Then
\begin{equation*}
\sum_{\substack{p \in \mathbb{P} \\ w \le p < z}} g(p) V(p) \Phi(p) \le (K-1)V(z) \Phi(z)-KV(z)\int_w^z \Phi(u) d\left(\frac{\log z}{\log u} \right).
\end{equation*}
\end{lemma}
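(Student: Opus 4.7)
The plan is to interpret the sum as a Riemann--Stieltjes integral against the step function $-V$, apply the hypothesis to replace $V(u)$ by the majorant $KV(z)\log z/\log u$, and then reassemble via a second integration by parts. This is the standard Abel-summation route for sieve identities of this shape.

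First I would observe that since $V(p^+)=V(p)(1-g(p))$ at each prime $p\in\mathbb{P}$, the function $-V$ is increasing with an atom of mass $g(p)V(p)$ at each such $p$; consequently
\[
\sum_{\substack{p\in\mathbb{P}\\ w\le p<z}} g(p)V(p)\Phi(p)=-\int_w^z\Phi(u)\,dV(u),
\]
where the integral is interpreted so that each atom in $[w,z)$ is picked up exactly once. Since $\Phi$ is continuous and $V$ has only jump discontinuities (with $V(u^-)=V(u)$ at each prime of $\mathbb{P}$), integration by parts yields the identity
\[
\sum_{\substack{p\in\mathbb{P}\\ w\le p<z}} g(p)V(p)\Phi(p)=-\Phi(z)V(z)+\Phi(w)V(w)+\int_w^z V(u)\,d\Phi(u).
\]

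Next I would invoke the hypothesis $V(u)\le KV(z)\log z/\log u$, valid for $1<u<z$, to majorise both the boundary term $\Phi(w)V(w)$ and the integral $\int_w^z V(u)\,d\Phi(u)$; in the latter, the inequality is preserved because $d\Phi$ is a nonnegative measure, by monotonicity of $\Phi$. This produces the upper bound
\[
\sum_{\substack{p\in\mathbb{P}\\ w\le p<z}} g(p)V(p)\Phi(p)\le -V(z)\Phi(z)+KV(z)\left[\Phi(w)\frac{\log z}{\log w}+\int_w^z\frac{\log z}{\log u}\,d\Phi(u)\right].
\]

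Finally I would integrate by parts in reverse on the bracket. Writing $G(u)=\log z/\log u$, so that $G(z)=1$, the identity $G(w)\Phi(w)+\int_w^z G\,d\Phi=\Phi(z)-\int_w^z\Phi\,dG$ collapses the bracket into $\Phi(z)-\int_w^z\Phi(u)\,d(\log z/\log u)$; combining with the leading $-V(z)\Phi(z)$ produces the $(K-1)V(z)\Phi(z)$ factor, yielding the stated inequality. The main point requiring care is keeping the left- versus right-continuity of $V$ straight at the atoms in the first integration by parts, together with the sign management when bounding the boundary contribution $\Phi(w)V(w)$ via the hypothesis: the estimate is clean under the natural standing assumption that $\Phi\ge 0$ on $[w,z]$, which holds automatically in the linear-sieve applications where $\Phi$ plays the role of a positive weight.
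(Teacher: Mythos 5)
The paper does not supply its own proof of this lemma: it cites it verbatim as Lemma~9.8 of Nathanson, so there is no internal argument to compare against. Your Abel--summation/Riemann--Stieltjes argument is correct and is the standard route to this estimate (and is essentially what the cited source does): identify the sum with $-\int_w^z\Phi\,dV$, integrate by parts, majorize $V(u)$ via the hypothesis $V(u)\le KV(z)\log z/\log u$, and integrate by parts a second time to reassemble $\Phi(w)\tfrac{\log z}{\log w}+\int_w^z\tfrac{\log z}{\log u}\,d\Phi$ into $\Phi(z)-\int_w^z\Phi\,d(\log z/\log u)$, which combined with the $-V(z)\Phi(z)$ boundary term gives the $(K-1)V(z)\Phi(z)$ factor. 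You are also right to flag that replacing $V(w)$ by $KV(z)\log z/\log w$ in the boundary term $\Phi(w)V(w)$ requires $\Phi(w)\ge 0$ (hence, by monotonicity, $\Phi\ge 0$ on $[w,z]$); this nonnegativity is assumed in Nathanson's original Lemma~9.8 but is missing from the restatement here, so your caveat is a genuine, if minor, correction rather than an extra assumption you are smuggling in. The only other point of care, which you acknowledge but do not spell out, is the left/right-continuity bookkeeping needed so that the Stieltjes integral over $[w,z)$ picks up exactly the atoms $g(p)V(p)$ at primes $p\in[w,z)$; taking $V$ right-continuous (i.e.\ replacing $p<z$ by $p\le z$ in the product when treating it as an integrator) makes this clean.
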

We can now prove our main lemma, which is an improved version of Lemma 9.5 \cite{Nathanson}.
\begin{lemma}
\label{lemma:expf_n}
Let $z\ge 2$, and $D$ real such that 
\begin{equation*} 
s=\frac{\log D}{\log z}\ge 
\begin{cases} 
1 & \text{if}  \quad n \quad \text{is odd} \\
0 & \text{if}\quad n \quad \text{is even} .
\end{cases} 
\end{equation*}
Let $\mathbb{P}$ be a set of primes and $g(d)$ be a multiplicative function such that
\begin{equation*}
0 \le g(p) < 1 \quad \text{for all} \quad p \in \mathbb{P}
\end{equation*} 
and
\begin{equation*}
\prod_{\substack{p\in \mathbb{P}\\ u \le p < z}}(1-g(p))^{-1} \le K \frac{\log z}{\log u},
\end{equation*}
for all $u$ such that $1 < u < z$ and $k$ such that
\begin{equation*}
1< K < 1+\frac{1}{200}.
\end{equation*}
Then
\begin{equation*}
T_n(D,z)<V(z)\left( f_n(s)+(K-1)\tau_n e^2h(s)\right),
\end{equation*}
where
\begin{equation} 
\label{eq:tau}
\tau_n=
\begin{cases} 
3 & \text{if}  ~ n=1 \\
\tau_{n-1}\left(\gamma +\left(\frac{4e}{3}+\gamma\right)(K-1)+\frac{8e}{ 3}\frac{(c_{n-1})^{n-2}}{\tau_{n-1}}+2\frac{(c_n)^{n-1}}{\tau_{n-1}}\right) & \text{if}~ n \ge 2.
\end{cases} 
\end{equation}
\end{lemma}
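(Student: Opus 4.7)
The plan is to proceed by induction on $n$, closely mirroring Nathanson's argument for Lemma 9.5 in \cite{Nathanson} but substituting the sharper constant $\gamma=0.9214$ from Theorem \ref{theo:H}, the sharper prime-sum lemma just stated, and the explicit quantitative bounds for $f_n$ supplied by Theorems \ref{lemma:fsmall} and \ref{lemma:f_n}. The base case $n=1$ is identical to Nathanson's and requires no modification, so the whole difficulty lies in the inductive step.

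For the inductive step, I would first rewrite $T_n(D,z)$ by peeling off the outermost prime. Using the definition of $T_n$ together with the factorization of $V$, one obtains the Buchstab-type identity
\begin{equation*}
T_n(D,z)=\sum_{\substack{p\in\mathbb{P}\\ w\le p<z}} g(p)\,V(p)\,T_{n-1}(D/p,p),
\end{equation*}
where $w$ is the appropriate lower cutoff dictated by the parity of $n$ and the constraint $p<y_1$. I would then apply the induction hypothesis to each inner factor $T_{n-1}(D/p,p)$, which converts $T_n$ into a sum whose summand is of the form $g(p)V(p)\Phi(p)$ with $\Phi(p)=f_{n-1}(s_p)+(K-1)\tau_{n-1}e^2 h(s_p)$ and $s_p=\log(D/p)/\log p$. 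Since $\Phi$ is continuous and increasing in $p$ (as $f_{n-1}$ and $h$ are decreasing in $s$), the prime-sum lemma stated immediately before Lemma \ref{lemma:expf_n} applies and replaces the discrete sum by the continuous expression
\begin{equation*}
(K-1)V(z)\Phi(z)-K V(z)\int_{w}^{z}\Phi(u)\,d\!\left(\frac{\log z}{\log u}\right).
\end{equation*}

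The next step is to recognise the integral of $f_{n-1}(s_u)$ against $d(\log z/\log u)$ as exactly $sf_n(s)$ via the integral-difference identity of Lemma \ref{intf} (after the standard change of variable $t=\log D/\log u$, so that $d(\log z/\log u)=-dt/s$); this is the step where the gain from $\gamma$ enters. The $(K-1)$-coefficient pieces are handled by bounding the integral of $h(t-1)$ via $H(s)\le \gamma s h(s)$ from Theorem \ref{theo:H}, and the boundary term $\Phi(z)$ is bounded similarly after using $h(s-1)\le \tfrac{4e}{3}h(s)$. The remaining $f_{n-1}$-contributions at the boundary and in the range $1\le s\le 3$ (where the integral equation for odd $n$ degenerates) are controlled by substituting the uniform bound $f_{n-1}(s)\le 2e^2(c_{n-1})^{n-2}h(s)$ and $f_n(s)\le 2e^2(c_n)^{n-1}h(s)$ from Theorems \ref{lemma:fsmall}--\ref{lemma:f_n}; these produce exactly the last two additive terms $\tfrac{8e}{3}(c_{n-1})^{n-2}/\tau_{n-1}$ and $2(c_n)^{n-1}/\tau_{n-1}$ appearing inside the parenthesis of \eqref{eq:tau}.

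The main obstacle will be the bookkeeping: one must keep every constant separated according to whether it is a main term (absorbed into $\gamma\tau_{n-1}$), a $(K-1)$-error (absorbed into $(\tfrac{4e}{3}+\gamma)(K-1)\tau_{n-1}$), or a residual coming from the $h$-controlled upper bound on $f_{n-1}$ and $f_n$, and verify that the recursive expression \eqref{eq:tau} is exactly what emerges, with no slack. A secondary technical point is the parity splitting: for odd $n$ one has to treat $1\le s\le 3$ via equation \eqref{eq:odd} and combine this with the main piece using $H(3)\le \gamma s h(s)$; for even $n$ the range $2\le s\le 4$ requires equation \eqref{eq:f4} and the same bound. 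Once these routine but delicate verifications are completed, the desired inequality $T_n(D,z)<V(z)(f_n(s)+(K-1)\tau_n e^2 h(s))$ follows immediately.
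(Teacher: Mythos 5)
Your proposal matches the paper's proof step for step: induction on $n$, peeling off the largest prime via a Buchstab-type identity, applying the prime-sum lemma to $\Phi(u)=f_{n-1}\bigl(\tfrac{\log D}{\log u}-1\bigr)+h_{n-1}\bigl(\tfrac{\log D}{\log u}-1\bigr)$, and then splitting the integral and boundary term using $\tfrac{K}{s}\int_s^\infty f_{n-1}(t-1)\,dt=Kf_n(s)$, $H(s)\le\gamma sh(s)$, $h(s-1)\le\tfrac{4e}{3}h(s)$, and the uniform bounds $f_{n-1}\le 2e^2(c_{n-1})^{n-2}h$, $f_n\le 2e^2(c_n)^{n-1}h$, which is exactly what the paper does. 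Two small slips worth flagging: the Buchstab identity should read $T_n(D,z)=\sum_p g(p)\,T_{n-1}(D/p,p)$ without the extra $V(p)$ factor (the $V(p_n)$ is already built into $T_{n-1}$, and your subsequent sentence is consistent with this corrected form); and the term $2(c_n)^{n-1}/\tau_{n-1}$ in $\tau_n$ actually arises from writing $Kf_n(s)=f_n(s)+(K-1)f_n(s)$ and bounding $(K-1)f_n(s)$, not from the small-$s$ range, which the paper treats separately at the very end by monotonicity of $f_n$ and $h_n$.
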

\begin{proof}
We start defining 
\begin{equation*}
h_n(s)=(K-1)\tau_n e^2h(s).
\end{equation*}
We thus want to prove 
\begin{equation}
\label{eq:T_n}
T_n(D,z)<V(z)\left( f_n(s)+h_n(s)\right).
\end{equation}
The proof of the above result is by induction on $n$. Let $n=1$. By Lemma 9.3 \cite{Nathanson} with $\beta =2$, we have that $T_1(D,z)=0$ for $s>3$. Since the right hand side of \eqref{eq:T_n} is positive, it follows that the inequality holds for $s>3$. If $1\le s \le 3$ then $sf_1(s)=3-s$ and 
\begin{equation*}
T_1(D,z)=V(D^{1/3})-V(z)
\end{equation*}
by (9.13) \cite{Nathanson}. It easily follows 
\begin{equation*}
\frac{T_1(D,z)}{V(z)}=\left(\frac{3}{s}-1\right)+\frac{3}{s}(K-1)\le f_1(s)+h_1(s).
\end{equation*}
This proves the lemma for $n=1$. Let $n>2$ and assume that the lemma holds for $n-1$. For $n$ even and $s\ge 2$, or for $n$ odd and $s\ge 3$, we define the function
\begin{equation*}
\Phi(u)=f_{n-1}\left( \frac{\log D}{\log u}-1\right)+h_{n-1}\left( \frac{\log D}{\log u}-1\right).
\end{equation*}
Now, using the induction hypothesis for $n-1$ as done in Theorem 9.5 \cite{Nathanson} we obtain that
\begin{align*}
T_n(D,z)\le & (K-1)V(z)(f_{n-1}(s-1)+h_{n-1}(s-1))+\\&+\frac{KV(z)}{s}\int_s^{\infty} (f_{n-1}(t-1)+h_{n-1}(t-1))dt.
\end{align*}
By Theorem \ref{lemma:f_n}, we have
\begin{equation*}
\frac{K}{s}\int_s^{\infty} f_{n-1}(t-1)dt=Kf_n(s),
\end{equation*}
and, by the definition of $H(s)$ and Theorem \ref{theo:H},
\begin{equation*}
\int_s^{\infty} h(t-1)dt=H(s)\le \gamma s h(s),
\end{equation*}
and thus 
\begin{equation*}
\frac{K}{s}\int_s^{\infty} h_{n-1}(t-1)dt\le \gamma K h_{n-1}(s).
\end{equation*}
Since $h(s-1)\le \frac{4e}{3}h(s)$ for $s\ge 2$, we have
\begin{equation*}
(K-1)h_{n-1}(s-1)< \frac{4e}{3}(K-1)h_{n-1}(s),
\end{equation*}
and
\begin{align*}
&(K-1)f_{n-1}(s-1)\le (K-1)2e^2(c_{n-1})^{n-2}h(s-1)\\&\le (K-1)\frac{8e}{3}e^2(c_{n-1})^{n-2}h(s)=\frac{8e}{3 }\frac{(c_{n-1})^{n-2}}{\tau_{n-1}} h_{n-1}(s).
\end{align*}
Therefore,
\begin{equation*}
\frac{T_n(D,z)}{V(z)}<Kf_n(s)+\left(\gamma K+\frac{4e}{3}(K-1)+\frac{8e}{ 3}\frac{(c_{n-1})^{n-2}}{\tau_{n-1}}\right)h_{n-1}(s).
\end{equation*}
By the definition of $h_{n-1}(s)$, we have
\begin{equation*}
(K-1)f_n(s)\le (K-1)2e^2(c_{n})^{n-1}h(s)<2 \frac{(c_{n})^{n-1}}{\tau_{n-1}}h_{n-1}(s),
\end{equation*}
and so
\begin{equation*}
Kf_n(s)< f_n(s)+2 \frac{(c_{n})^{n-1}}{\tau_{n-1}}h_{n-1}(s).
\end{equation*}
We thus have
\begin{equation*}
\frac{T_n(D,z)}{V(z)}
\end{equation*}
\begin{equation*}
<f_n(s)+\left(\gamma +\left(\frac{4e}{3}+\gamma\right)(K-1)+\frac{8e}{ 3}\frac{(c_{n-1})^{n-2}}{\tau_{n-1}}+2\frac{(c_n)^{n-1}}{\tau_{n-1}}\right)h_{n-1}(s)
\end{equation*}
\begin{equation*}
= f_n(s)+h_{n}(s).
\end{equation*}
Let $n\ge 3$ be odd, and let $1 \le s \le 3$. If $z=D^{1/3}$, by (9.15) \cite{Nathanson} and the same argument previously used, we obtain
\begin{equation*}
T_n(D,z)<V(z)(f_n(3)+h_n(3))\le V(z)(f_n(s)+h_n(s)),
\end{equation*}
since $f_n(s)$ and $h_n(s)$ are decreasing. This completes the proof.
\end{proof}
Note that the choice of $1/200$ is the same made by Nathanson, while in our case it is possible to choose a larger value we choose not to do so here for the sake of continuity. Note that for computational purposes it is possible to take, in Lemma \ref{lemma:expf_n}, $c_1=1$.
We can now improve on Theorem 9.6 \cite{Nathanson}.
\begin{theorem}
\label{theo:G}
Let $z$, $D$, $s$, $\mathbb{P}$, $g(d)$, and $K=1+\epsilon$ satisfying the hypotheses of Lemma \ref{lemma:expf_n}. Let
\begin{equation*}
G(z, \lambda^{\pm})=\sum_{d|P(z)}\lambda^{\pm}g(d).
\end{equation*}
Then
\begin{equation*}
G(z, \lambda^{+})=V(z)\left(F(s)+\epsilon e^2 h(s) \sum_{\substack{n=1\\ n \quad \text{odd}}}^{\infty}\tau_n \right)
\end{equation*}
and 
\begin{equation*}
G(z, \lambda^{-})=V(z)\left(f(s)+\epsilon e^2 h(s) \sum_{\substack{n=1\\ n \quad \text{even}}}^{\infty}\tau_n \right),
\end{equation*}
where $F(s)$ and $f(s)$ are the continuous functions defined in (9.27) and (9.28) \cite{Nathanson}.
\end{theorem}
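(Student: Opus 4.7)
My plan is to follow Nathanson's proof of his Theorem 9.6 essentially verbatim, substituting our sharper per-level control on $T_n(D,z)$ from Lemma \ref{lemma:expf_n} in place of his weaker bound. The starting ingredient is the combinatorial identity from Chapter 9 of \cite{Nathanson} which, via the iterated Buchstab decomposition underlying the Rosser--Iwaniec weights $\lambda^{\pm}$, expresses $G(z,\lambda^{+})$ (respectively $G(z,\lambda^{-})$) as $V(z)$ plus a parity-restricted sum of the $T_n(D,z)$ with $n$ odd (respectively $n$ even). I would cite this decomposition directly and then set up the two computations in parallel.

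Next, I would apply Lemma \ref{lemma:expf_n} termwise: for each $n$ in the relevant parity class it gives
$$T_n(D,z) < V(z)\bigl(f_n(s) + \epsilon\,\tau_n e^{2} h(s)\bigr).$$
Summing the leading contributions over the correct parity then collapses, by the very definitions (9.27)--(9.28) in \cite{Nathanson}, to $F(s)$ and $f(s)$ respectively, while the error contributions pull out the common factor $\epsilon e^{2} h(s)$ and leave behind exactly $\sum_{n \text{ odd}} \tau_n$ or $\sum_{n \text{ even}} \tau_n$. Once this bookkeeping is carried out the claimed identities fall out immediately.

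The main obstacle is verifying convergence of the two parity-restricted series $\sum \tau_n$, since otherwise the factor multiplying $\epsilon e^2 h(s)$ is meaningless. From the recursion \eqref{eq:tau} and the fact that $c_n \le 0.9214 < 1$ for every $n$ (Theorem \ref{lemma:f_n}), both corrective terms $\tfrac{8e}{3}(c_{n-1})^{n-2}/\tau_{n-1}$ and $2(c_n)^{n-1}/\tau_{n-1}$ tend to $0$, so the ratio $\tau_n/\tau_{n-1}$ approaches $\gamma + (\tfrac{4e}{3}+\gamma)(K-1)$, which is strictly less than $1$ thanks to $K - 1 < 1/200$. Consequently both sub-series decay geometrically with ratio close to $\gamma$, and the sharper Table \ref{tab:cn} can be used to make the partial sums and their tails fully effective if needed. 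With convergence secured, the remaining assembly of the bound is a routine reorganisation of Nathanson's argument.
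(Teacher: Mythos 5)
Your proposal matches the paper's proof: the paper simply cites Nathanson's identity (9.10), which is exactly the Buchstab decomposition of $G(z,\lambda^{\pm})$ into $V(z)$ plus/minus a parity-restricted sum of the $T_n(D,z)$, and then applies Lemma \ref{lemma:expf_n} termwise. Your explicit convergence check on $\sum \tau_n$ is a sensible addition (the paper handles this separately in Lemma \ref{lemma:exprec}), and your computation that $\gamma + (\tfrac{4e}{3}+\gamma)(K-1) < 1$ for $K-1<1/200$ is correct. One small point worth flagging: for $\lambda^{-}$ the decomposition is $G(z,\lambda^{-}) = V(z) - \sum_{n\,\text{even}} T_n(D,z)$, so after applying the upper bound on $T_n$ you obtain $G(z,\lambda^{-}) \ge V(z)\bigl(f(s) - \epsilon e^{2}h(s)\sum_{n\,\text{even}}\tau_n\bigr)$, with a minus sign; this is the sign that actually feeds into Theorem \ref{theo:JR}, and the $+$ (and equality sign) appearing in the statement of Theorem \ref{theo:G} as printed is evidently a typographical slip that your write-up should correct rather than reproduce.
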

\begin{proof}
The proof follows easily by (9.10) \cite{Nathanson} and Lemma \ref{lemma:expf_n}.
\end{proof}
In Table \ref{tab:tau} we report upper bounds for $\tau_n$ for `small' $n$, obtained by Theorem \ref{lemma:fsmall}. Note that we use a higher decimal precision in the computations compared to Table \ref{tab:cn}. We compute an upper bound for $\tau_n$ recursively, by equation \eqref{eq:tau}, via an upper bound for the right hand side term obtained with \verb|N(, digits=10)*(1+10^(-9))|.

\begin{table}[H]
    \begin{tabular}{ | l | l | l | l | l | l |}
    \hline
    $n$ & $\tau_n $ & $n$ & $\tau_n $ & $n$ & $\tau_n $\\  
    \hline
    $1$& $3$ & $18-19$ & $8$ & $138-179$ &$10^{-2}$\\
    \hline
    $2$& $11$ & $20-22$ & $7$ & $180-220$ &$10^{-3}$\\
    \hline
    $3$& $13$ & $23-25$ & $6$ & $221-262$ & $10^{-4}$\\
    \hline
    $4-6$& $14$ & $26-29$ & $5$ & $263-303$& $10^{-5}$\\
    \hline
    $7-9$& $13$ & $30-34$ & $4$ & $304-345$ &$10^{-6}$\\
    \hline
    $10-11$& $12$ & $35-42$ & $3$ & $346-386$ & $10^{-7}$\\
    \hline
   $12$ & $11$ & $43-54$ & $2$ & $387-428$ & $10^{-8}$\\
    \hline
    $13-15$ & $10$ & $55-96$ & $1$ & $429-450$& $10^{-9}$\\
    \hline
    $16-17$&$9$ & $97-137$& $10^{-1}$ &&\\
    \hline
    \end{tabular}
\caption{Upper bound for $\tau_n$}  
\label{tab:tau}  
\end{table}
We now obtain a bridging result.
\begin{lemma}
\label{lemma:exprec}
We have 
\begin{equation*}
F_1=\sum_{\substack{n=1\\ n \quad \text{odd}}}^{\infty}\tau_n \le 164
\end{equation*}
and 
\begin{equation*}
f_1=\sum_{\substack{n=1\\ n \quad \text{even}}}^{\infty}\tau_n\le 162.
\end{equation*}
\end{lemma}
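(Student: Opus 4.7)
The plan is computational: leverage the full-precision values of $\tau_n$ produced by the recursion \eqref{eq:tau} (the same Sage computation that, after rounding, yields Table \ref{tab:tau}), sum them by parity up to some cutoff, and bound the tail geometrically.

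First, under the standing hypothesis $K - 1 \le 1/200$ and with $\gamma = 0.9214$ from Theorem \ref{theo:H}, the multiplicative factor
\begin{equation*}
A := \gamma + \left(\frac{4e}{3} + \gamma\right)(K - 1)
\end{equation*}
in \eqref{eq:tau} satisfies $A \le 0.9214 + (4e/3 + 0.9214)/200 < 0.945 < 1$. Since $c_n \le 0.9214 < 1$ for every $n$ by Theorem \ref{lemma:f_n}, the source terms $B_n := \frac{8e}{3}c_{n-1}^{n-2} + 2 c_n^{n-1}$ in the equivalent affine recursion $\tau_n = A \tau_{n-1} + B_n$ also decay geometrically, so $\sum_n \tau_n$ converges.

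Next I would evaluate $\sum_{n \le 450} \tau_n$ separately for $n$ odd and $n$ even using the unrounded ten-digit-precision outputs of the recursion \eqref{eq:tau} rather than the coarsened integer entries of Table \ref{tab:tau}, carrying enough digits that cumulative rounding stays harmless. A direct sum of the rounded-up table values would overshoot the target, so working with the precise recursion output is essential; the same controlled-inflation safeguard used elsewhere in the paper handles this.

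For the tail, Table \ref{tab:tau} gives $\tau_{450} \le 10^{-9}$, while $B_n$ is bounded by $(8e/3 + 2)(0.9214)^{n-2}$, of order $10^{-16}$ already at $n = 450$. Iterating $\tau_n \le A \tau_{n-1} + B_n$ then yields
\begin{equation*}
\sum_{n > 450} \tau_n \le \frac{\tau_{450}}{1 - A} + \text{(negligible)} < 10^{-7},
\end{equation*}
far below the tolerance. Splitting the bulk and tail contributions by parity produces the claimed bounds $F_1 \le 164$ and $f_1 \le 162$. The main obstacle is purely bookkeeping, namely verifying the parity-split partial sum up to $n = 450$ to sufficient precision; no conceptual difficulty arises.
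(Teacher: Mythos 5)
Your proposal is correct and follows essentially the same route as the paper: sum the high-precision $\tau_n$ values by parity up to a finite cutoff and control the tail geometrically, with the same caveat (which you correctly flag) that the rounded entries of Table~\ref{tab:tau} are too coarse and the unrounded recursion output must be used. The only slight variation is cosmetic: the paper reads the geometric ratio off the observed quantity $\tau_{k+2}/\tau_{k+1}$, whereas you derive it from the structural constant $A=\gamma+(\tfrac{4e}{3}+\gamma)(K-1)<1$ in the affine recursion $\tau_n = A\tau_{n-1}+B_n$, which if anything makes the tail bound a touch more self-contained.
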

\begin{proof}
By the definition of $\tau_n$ it is easy to see that for any $k_1$ odd
\begin{equation*}
F_1\le \sum_{\substack{n=1\\ n \quad \text{odd}}}^{k_1}\tau_{n} +\tau_{k_1+1} \sum_{\substack{n=1\\ n \quad \text{odd}}}^{\infty}\left( \frac{\tau_{k_1+2}}{\tau_{k_1+1}}\right)^n
\end{equation*}
 and for any $k_2$ even
 \begin{equation*}
f_1\le\sum_{\substack{n=1\\ n \quad \text{even}}}^{k_2}\tau_{n} +\tau_{k_2+1} \sum_{\substack{n=1\\ n \quad \text{odd}}}^{\infty}\left( \frac{\tau_{k_2+2}}{\tau_{k_2+1}}\right)^n.
\end{equation*}
The result now follows from Table \ref{tab:tau}, note that we used a higher precision in the computations.
\end{proof}
We can now improve Theorem 9.7 \cite{Nathanson}. The improvement will be on the constant $e^{14}$ that will be reduced to around $160\cdot e^2$.
\begin{theorem}[Jurkat--Richert]
\label{theo:JR}
Let $A=\{a(n)\}_{n=1}^{\infty}$ be an arithmetic function such that
\begin{equation*}
a(n)\ge 0 \quad \text{for all} \quad n \quad \text{and} \quad 
|A|=\sum_{n=1}^{\infty} a(n)< \infty.
\end{equation*}
Let $\mathbb{P}$ be a set of prime numbers and for $z\ge 2$, let
\begin{equation*}
P(z)=\prod_{\substack{p \in \mathbb{P}\\ p<z}}p.
\end{equation*}
Let 
\begin{equation*}
S(A,\mathbb{P}, z)=\sum_{\substack{n=1\\ (n, P(z))=1}}^{\infty}a(n).
\end{equation*}
For every $n\ge 1$, let $g_n(d)$ be a multiplicative function such that
\begin{equation*}
0\le g_n(p)< 1 \quad \text{for all} \quad p \in \mathbb{P}.
\end{equation*}
Define $r(d)$ by
\begin{equation*}
|A_d|=\sum_{\substack{n=1\\ d|n}}^{\infty} a(n)= \sum_{n=1}^{\infty} a(n)g_n(d)+r(d).
\end{equation*}
Let $\mathbb{Q}\subseteq \mathbb{P}$, and $Q$ the products of its primes. Suppose that, for some $\epsilon$ such that $0<\epsilon<1/200$, the inequality
\begin{equation*}
\prod_{\substack{p\in \mathbb{P}/\mathbb{Q}\\ u \le p < z}}(1-g(p))^{-1} \le (1+\epsilon) \frac{\log z}{\log u},
\end{equation*}
holds for all $n$ and $1<u<z$. Then, for any $D\ge z$ there is an upper bound
\begin{equation*}
S(A,\mathbb{P},z)<(F(s)+\epsilon 164 e^2h(s))X+R,
\end{equation*}
and for any $D\ge z^2$ there is a lower bound
\begin{equation*}
S(A,\mathbb{P},z)<(f(s)-\epsilon 162 e^2h(s))X+R,
\end{equation*}
where
\begin{equation*}
s=\frac{\log D}{\log z},
\end{equation*}
$f(s)$ and $F(s)$ are two functions defined in (9.27) and (9.28) \cite{Nathanson},
\begin{equation*}
X=\sum_{n=1}^{\infty} a(n) \prod_{p|P(z)}(1-g_n(p)),
\end{equation*}
and the remainder term is
\begin{equation*}
R=\sum_{\substack{d|P(z)\\ d<QD}}|r(d)|.
\end{equation*}
If there is a multiplicative function $g(d)$ such that $G_n(d)=g(d)$ for all $n$, then
\begin{equation*}
X=V(z)|A|, \quad \text{where} \quad 
V(z)=\prod_{p|P(z)}(1-g_n(p)).
\end{equation*}
\end{theorem}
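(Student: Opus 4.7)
The plan is to follow the structure of Nathanson's proof of Theorem 9.7 almost verbatim, but to substitute our sharper estimates from Theorem \ref{theo:G} and Lemma \ref{lemma:exprec} at the point where Nathanson invokes the cruder bound that produces his $e^{14}$ constant. So the proof is mostly about correctly threading the improved numerics through a known argument; the conceptual work has already been done in the preceding sections.

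First, I would recall the Buchstab--Rosser--Iwaniec combinatorial sieve identities for $\lambda^{\pm}$, which give
\begin{equation*}
S(A,\mathbb{P},z) \le \sum_{d\mid P(z)} \lambda^{+}(d)\,|A_d|
\mand S(A,\mathbb{P},z) \ge \sum_{d\mid P(z)} \lambda^{-}(d)\,|A_d|,
\end{equation*}
exactly as stated in (9.10) of \cite{Nathanson}. Then I would insert the decomposition $|A_d|=\sum_n a(n)g_n(d)+r(d)$ to split each sum into a main term and a remainder. The main term in the upper-bound case is $\sum_n a(n)\,G_n(z,\lambda^{+})$, and likewise with $\lambda^{-}$ in the lower-bound case; the absolute error contributed by the $r(d)$ is absorbed into $R=\sum_{d\mid P(z),\,d<QD}|r(d)|$ using the support of $\lambda^{\pm}$, which lives inside $\{d<QD\}$ under our hypotheses on $\mathbb{Q}$.

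Next, I would apply Theorem \ref{theo:G} to each $G_n(z,\lambda^{\pm})$ with multiplicative function $g_n$. This gives
\begin{equation*}
G_n(z,\lambda^{+}) \le V_n(z)\Bigl(F(s)+\epsilon e^2 h(s)\sum_{n\text{ odd}}\tau_n\Bigr),
\end{equation*}
and the analogous lower bound with $f(s)$ and $\sum_{n\text{ even}}\tau_n$. Applying Lemma \ref{lemma:exprec} replaces those two tail sums by the explicit constants $164$ and $162$. Summing over $n$ against $a(n)\ge 0$ produces the factor $X=\sum_n a(n)\prod_{p\mid P(z)}(1-g_n(p))$ in front, exactly as claimed; the identification $X=V(z)|A|$ in the uniform case $g_n=g$ is immediate from the definitions.

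The only real obstacle is clerical: one must verify that $\lambda^{\pm}$ is supported on $d<QD$ under the hypothesis on $\mathbb{Q}$ (so that the remainder $R$ is the one stated), and that the constraint $D\ge z$ (resp.\ $D\ge z^{2}$) in the statement matches the regime $s\ge 1$ (resp.\ $s\ge 2$) required for the odd (resp.\ even) levels of Theorem \ref{theo:G}. Both checks are contained in Nathanson's argument and can be imported unchanged. Once these are in place, collecting terms yields the two claimed inequalities with constants $164\,e^{2}$ and $162\,e^{2}$, which is the promised improvement over $e^{14}$.
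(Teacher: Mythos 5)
Your proposal is correct and follows essentially the same route as the paper: the paper's own proof is the one-line remark that the argument is identical to Nathanson's Theorem 9.7 with Theorem \ref{theo:G} and Lemma \ref{lemma:exprec} substituted in, and you have simply unpacked that one-liner into the concrete steps of Nathanson's argument. The only small slip is the claim that Theorem \ref{theo:G} itself ``requires'' $s\ge 2$ at the even levels; Lemma \ref{lemma:expf_n} actually allows $s\ge 0$ for even $n$, and the restriction $D\ge z^2$ in the lower bound comes from the fact that $f(s)$ in (9.27) of \cite{Nathanson} vanishes for $s<2$, making the bound vacuous there.
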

\begin{proof}
The proof is the same as the proof of Theorem 9.7 \cite{Nathanson}, using Theorem \ref{theo:G} and Lemma \ref{lemma:exprec}.
\end{proof}
We can observe that 
\begin{equation*}
164 e^2h(s) \epsilon \le 0.82 \quad \text{and} \quad 
 162 e^2h(s)\epsilon \le 0.81,
\end{equation*}
this gives a uniform upper bound for the 'constants' appearing in Theorem \ref{theo:JR}. In \cite{Nathanson} the constant was approximately $\approx 810$, and thus our is around $1000$ times smaller. \hfill \newline 
As hinted before, the choice of $\epsilon=1/200$ is made to be consistent with the one in Nathanson's result, where it is the biggest number such that $f_1$ and $F_1$ converge. In our result we can take $\epsilon$ significantly larger. By the definition of $\tau_n$, it is clear that we need
\begin{equation*}
\gamma +\left(\frac{4e}{3}+\gamma\right)(K-1)<1,
\end{equation*}
that holds for $\epsilon \ge 1/63$. We now conclude the paper reporting the upper bounds for $f_1$ and $F_1$ for different $\epsilon$.
\begin{table}[H]
    \begin{tabular}{ | l | l | l | l | l | l | l | l | l |}
    \hline
    $\epsilon^{-1}$ & $f_1 $ & $F_1$ &$\epsilon^{-1}$ & $f_1 $ & $F_1$&$\epsilon^{-1}$ & $f_1 $ & $F_1$ \\  
    \hline
    $63$& $32881$&$32875$ & $72$& $865$&$867$ & $81$& $500$&$500$\\
    \hline
    $64$& $7582$&$7580$ &$73$& $790$&$791$ & $84$& $450$&$450$\\
    \hline
    $65$& $3890$& $3890$ & $74$& $729$& $730$ & $87$& $400$&$400$\\
    \hline
    $66$& $2542$& $2542$ & $75$& $678$& $679$ & $93$& $350$&$350$\\
    \hline
    $67$& $1880$& $1881$ & $76$& $635$& $636$ & $99$& $300$& $300$\\
    \hline
    $68$& $1480$&$1500$ & $77$& $598$&$600$ & $114$& $250$& $250$\\
    \hline
   $69$ & $1254$&$1255$ & $78$ & $566$&$568$ & $143$& $200$& $200$\\
    \hline
    $70$&$1084$&$1086$ & $79$& $538$&$540$ &  $249$&$150$&$150$\\
    \hline
    $71$&$960$&$962$ & $80$&$514$&$515$ &&&\\
    \hline
    \end{tabular}
    
\caption{Upper bounds for $F_1$ and $f_1$ for certain $\epsilon$}  
\label{tab:fF}  
\end{table}
\section*{Acknowledgements}
I would like to thank my supervisor Tim Trudgian for his help in developing this paper and his insightful comments. I would also like to thank Pieter Moree for his helpful comments.

\end{document}